\newcommand{\har}{\!\!\upharpoonright\!\!}
\renewcommand{\int}{\mbox{int}}
\newcommand{\cl}{\mathrm{cl}}
\newcommand{\subsetnwd}{\subseteq_{\mathrm{nwd}}}
\newcommand{\gdelta}{\mathbf{G}_\delta}
\newcommand{\btree}{\omega^{<\omega}}
\newcommand{\lh}{\mathrm{lh}}
\newcommand{\proj}{\mathrm{proj}}
\newcommand{\baire}{\omega^\omega}
\newcommand{\cantor}{2^\omega}
\newtheorem{theorem}{Theorem}
\newtheorem{lemma}{Lemma}
\theoremstyle{definition}
\newtheorem*{claim}{Claim}
\author{Janusz Pawlikowski}
\author{Marcin Sabok}
\thanks{research supported by MNiSW grant N 201 361836} 
\address{Mathematical Institute,
  Wroc\l aw University, pl. Grunwaldzki $2\slash 4$,
  $50$-$384$ Wroc\l aw, Poland }
\email{pawlikow@math.uni.wroc.pl, sabok@math.uni.wroc.pl}
\title{The Solecki dichotomy for functions with analytic
  graphs}
\begin{document}

\begin{abstract}
  A dichotomy discovered by Solecki says that a Baire class
  1 function from a Souslin space into a Polish space either
  can be decomposed into countably many continuous
  functions, or else contains one particular function which
  cannot be so decomposed. In this paper we generalize this
  dichotomy to arbitrary functions with analytic graphs. We
  provide a ``classical'' proof, which uses only elementary
  combinatorics and topology.
\end{abstract}

\subjclass[2000]{03E15, 26A15, 54H05}

\keywords{$\sigma$-continuity, Borel functions}

\maketitle

\section{Introduction}

An old question of Lusin asked whether there exists a Borel
function which cannot be decomposed into countably many
continuous functions. By now, several examples have been
given, by Keldi\v{s}, Adyan and Novikov among others. A
particularly simple example, the function $P$ has been found
in \cite{CMPS}. To define it, we introduce two topologies on
$\baire$.  The first one, referred to as \textit{the Baire
  topology} is the usual one.  The second, referred to as
\textit{the Cantor topology} is the one induced by the
identification of $\omega$ with
$\{0\}\cup\{2^{-n}:n<\omega\}\subseteq[0,1]$ via $0\mapsto0,
n\mapsto2^{-n}$. Note that the space $\baire$ with the
Cantor topology is homeomorphic to the Cantor space
$\cantor$. The function $P$ is the identity function from
$\baire$ with the Cantor topology into $\baire$ with the
Baire topology.

Let $H=\dot H=\ddot H$ be the Hilbert cube. We consider
subsets $\dot A\subseteq \dot H$, $\ddot A\subseteq\ddot H$
and a function $A:\dot A\rightarrow\ddot A$. In this paper
we will prove the following result.

\begin{theorem}\label{pmin}
  If $\dot A$ and $\ddot A$ are analytic and $A:\dot
  A\rightarrow\ddot A$ is analytic as a subset of $\dot
  A\times\ddot A$, then either $A$ can be covered by
  countably many continuous functions, or else there are
  $\dot\varphi:\baire\rightarrow \dot A$ and
  $\ddot\varphi:\baire\rightarrow \ddot A$ such that the
  following diagram commutes 
  $$
  \begin{CD}
    \baire @ >\ddot\varphi >> \ddot A\\
    @AA P A @AA A A\\
    \baire @> \dot\varphi >> \dot A
  \end{CD}
  $$
  and
  \begin{itemize}
  \item $\dot\varphi$ is a topological embedding from
    $\baire$ with the Cantor topology into $\dot A$,
  \item $\ddot\varphi$ is a topological embedding from
    $\baire$ with the Baire topology into $\ddot A$.
  \end{itemize}
\end{theorem}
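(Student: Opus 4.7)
My plan is to prove the contrapositive: assuming $A$ admits no countable cover by continuous partial functions, I construct the embeddings $\dot\varphi$ and $\ddot\varphi$ via a Cantor-scheme recursion. Let $\mathcal{I}$ denote the $\sigma$-ideal of sets $X\subseteq\dot A$ such that $A\har X$ is covered by countably many continuous partial functions; the negated conclusion says exactly $\dot A\notin\mathcal{I}$. The heart of the argument is a \emph{splitting lemma}: given an analytic $\dot B\subseteq\dot A$ with $A\har\dot B\notin\mathcal{I}$, an analytic $\ddot B\supseteq A[\dot B]$, and $\varepsilon>0$, the lemma produces a point $\dot x\in\dot B$ and infinitely many analytic $\dot B_n\subseteq\dot B$ with basic open enclosures $\dot V_n\subseteq\dot H$ and $\ddot V_n\subseteq\ddot H$ of diameter less than $\varepsilon$, such that $\dot B_n\subseteq\dot V_n$, $A[\dot B_n]\subseteq\ddot V_n$, the $\dot V_n$ accumulate at $\dot x$ in the $\dot H$-metric, the closures of the $\ddot V_n$ are pairwise disjoint and all miss $A(\dot x)$, and each $A\har\dot B_n\notin\mathcal{I}$.

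Granted the splitting lemma, I build the scheme by induction on $s\in\otr$. To each $s$ I associate an analytic $\dot B_s\subseteq\dot A$ with $A\har\dot B_s\notin\mathcal{I}$, a splitting point $\dot x_s\in\dot B_s$, its image $\ddot y_s=A(\dot x_s)$, and basic opens $\dot V_s,\ddot V_s$ of diameter at most $2^{-|s|}$, arranging $\dot x_{s^\frown n}\to\dot x_{s^\frown 0}$ in $\dot H$ as $n\to\infty$ together with $\cl(\ddot V_{s^\frown n})\subseteq\ddot V_s\setminus\{\ddot y_s\}$ pairwise disjoint. For $x\in\baire$ I then define $\dot\varphi(x)=\lim_k\dot x_{x\har k}$ and $\ddot\varphi(x)=\lim_k\ddot y_{x\har k}$; the diameter control makes both limits exist. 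The identity $\ddot\varphi(x)=A(\dot\varphi(x))$ follows from $\ddot y_s=A(\dot x_s)$ at every finite stage and the closure of the analytic graph of $A$ along the converging pairs $(\dot x_{x\har k},\ddot y_{x\har k})$. The shrinking, pairwise disjoint $\ddot V_s$ scheme makes $\ddot\varphi$ a Baire-topology embedding. For $\dot\varphi$: a Cantor-convergence $x^{(j)}\to x$ in $\baire$ amounts to $x^{(j)}(k)=x(k)$ eventually when $x(k)\geq 1$, and to $x^{(j)}(k)$ diverging to infinity when $x(k)=0$; the recursion clause $\dot x_{s^\frown n}\to\dot x_{s^\frown 0}$ propagates exactly this asymmetry into $\dot H$, making $\dot\varphi$ a Cantor-topology embedding.

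The principal obstacle is the splitting lemma itself. One must show that the set of points in $\dot B$ failing the splitting condition carries an $\mathcal{I}$-small restriction of $A$: near each such ``regular'' point one extracts a local cover of $A$ by finitely many continuous pieces, and these local covers must be assembled into a single $\sigma$-continuous decomposition via a countable-basis argument in $\dot H\times\ddot H$. The analyticity of the graph of $A$ is essential here to ensure that the property ``$A\har X\in\mathcal{I}$'' is well-behaved under the relevant analytic-set operations. A secondary subtlety is arranging that infinitely many of the $\omega$ splitting branches produced by the lemma simultaneously satisfy the non-$\mathcal{I}$ condition at the next level; this is handled by the $\sigma$-additivity of $\mathcal{I}$ together with a diagonal/pigeonhole step.
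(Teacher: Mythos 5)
Your outline follows the familiar Cantor-scheme template, but it skips the two difficulties that the paper's machinery is built precisely to overcome, and both are genuine gaps. The first concerns the identity $\ddot\varphi(x)=A(\dot\varphi(x))$. You justify it by ``the closure of the analytic graph of $A$ along the converging pairs $(\dot x_{x\har k},\ddot y_{x\har k})$,'' but the graph of $A$ is only \emph{analytic}, hence in general not closed, so the limit of a sequence of points of $A$ along a branch need not lie in $A$ (indeed $\dot\varphi(x)$ need not even lie in $\dot A$). This is exactly why the paper unfolds every analytic set in sight to a closed \emph{cylinder} in $(\baire)^\omega\times\dot H\times\ddot H$ and runs the Lusin scheme among solid subcylinders of a fixed closed $\tilde A$: the singleton intersection $\bigcap_n\tilde Z_{s\har n,t\har n}$ then lies in the closed set $\tilde A$, and $\pi$ sends it into $A$. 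Without some witness-tracking device of this kind the commutativity of the diagram simply does not follow from the finite-stage data.

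The second gap is the Cantor-continuity of $\dot\varphi$. Your recursion only imposes the \emph{within-family} convergence $\dot x_{s^\frown n}\to\dot x_{s^\frown 0}$ (and even there the splitting lemma as stated makes the $\dot V_{s^\frown n}$ accumulate at $\dot x_s$, not at $\dot x_{s^\frown 0}$, which are different things unless you also arrange $\dot x_{s^\frown 0}=\dot x_s$). But Cantor-continuity requires control \emph{across} different parents. Take $x=\langle 0,1,0,0,\dots\rangle$ and $x^{(j)}=\langle j,1,0,0,\dots\rangle$; then $x^{(j)}\to x$ in the Cantor topology, so you need $\dot\varphi(x^{(j)})\to\dot\varphi(x)$, i.e.\ $\dot x_{\langle j,1\rangle}$ must approach $\dot x_{\langle 0,1\rangle}$. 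Nothing in a node-by-node recursion forces this: $\dot x_{\langle j,1\rangle}$ is only pinned inside $\dot V_{\langle j\rangle}$ and $\dot x_{\langle 0,1\rangle}$ inside $\dot V_{\langle 0\rangle}$, both of diameter about $2^{-1}$. The paper's construction is structured around precisely this problem: it introduces the digit-projection $\sigma\mapsto\sigma'$ on $\omega^n$, fixes an ordering $\preceq$ with $\sigma'\preceq\sigma$, and makes each application of Lemma \ref{lemma5} \emph{re-shrink all previously constructed pieces} $\tilde X_{\sigma,\tau}$ with $\xi\subseteq\sigma$, so that at the end the Hausdorff estimates $|\dot Z_\rho,\dot Z_\sigma|<|\rho,\sigma|+2\cdot2^{-n}$ hold uniformly over Cantor-close $\rho,\sigma\in\omega^n$ (properties (1)--(5) and (iii), with Lemma \ref{lemma2} doing the combinatorial work). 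A one-pass splitting recursion that never revisits earlier nodes cannot produce this global control, so the proposal as written does not establish that $\dot\varphi$ is continuous, let alone an embedding, for the Cantor topology.
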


Theorem \ref{pmin} has been proved by Solecki \cite[Theorem
4.1]{Sol} in the case when $A$ is a Baire class 1 function.
Zapletal \cite[Corollary 2.3.48]{Zpl:DSTDF} proved this
dichotomy in the case when $A$ is a Borel function and $\dot
A=\baire$ (however, that argument does not seem to work in
case when $\dot A$ is an arbitrary Souslin space). The
formulation of Theorem \ref{pmin} seems to be a natural
generalization of both these cases.

It should be noted that both proofs of Solecki and Zapletal
use quite sophisticated methods of mathematical logic and
the Borel case uses the Baire class 1 case and Borel
determinacy. On the other hand, our proof works in the
second-order arithmetic.

\section{Notation}

If $T\subseteq S^{<\omega}$ ($S$ is some countable set) is a
tree, then we write $\lim T$ for $\{s\in S^\omega: \forall
n<\omega\ s\har n\in T\}$. For $\sigma\in S^{<\omega}$
we denote by $[\sigma]$ the set $\{s\in
S^\omega:\ \sigma\subseteq s\}$.

The Hilbert cube $H$ is endowed with the standard complete
metric. We write $|x,y|$ for the distance between points
$x,y\in H$, $|X|$ for the diameter of a subset $X\subseteq
H$ and $|X,Y|$ for the Hausdorff distance between
$X,Y\subseteq H$.

For $k\leq l<\omega$ write
\begin{displaymath}
|k,l|= \left\{ 
    \begin{array}{ll}
      2^{-k}-2^{-l} & \mbox{if}\quad k>0,\\
      2^{-l} & \mbox{if}\quad k=0
    \end{array} \right.   
\end{displaymath}
and for $l<k<\omega$ let  $|k,l|=|l,k|$.

If $0<N\leq\omega$ and $s,r\in\omega^N$, define
$$|s,r|=\sum_{n<N}|s(n),r(n)|\cdot2^{-n}.$$ Notice that for
$N=\omega$ the above metric gives the Cantor topology on
$\baire$.

We fix also a complete metric for $(\baire)^\omega$ and use
$|\cdot,\cdot|$ to denote distance and diameter.

For $(r,\dot x,\ddot x),(s,\dot y,\ddot
y)\in(\baire)^\omega\times\dot H\times\ddot H$ write
$$|(r,\dot x,\ddot x),(s,\dot y,\ddot y)|=\max(|r,s|,|\dot
x,\dot y|,|\ddot x,\ddot y|)$$ and, as above, write $|\tilde
X|$ for the diameter of $\tilde
X\subseteq(\baire)^\omega\times(\dot H\times\ddot H)$

Let $\pi:(\baire)^\omega\times(\dot H\times\ddot
H)\rightarrow\dot H\times\ddot H$,
$\dot\pi:(\baire)^\omega\times(\dot H\times\ddot
H)\rightarrow\dot H$ and
$\ddot\pi:(\baire)^\omega\times(\dot H\times\ddot
H)\rightarrow\ddot H$ be the projection maps.

If $X$ is a subset of $A$, then by $\dot X$ and $\ddot X$ we
denote the projections of $X$ onto $\dot H$ and $\ddot H$,
respectively. We write $Y\subsetnwd X$ if $Y\subseteq X$ and
$\dot Y\subseteq\cl(\dot X\setminus \dot Y)$.

\section{The dichotomy}

In the remaining part of this paper we give a proof of Theorem
\ref{pmin}.

We denote by $I$ the $\sigma$-ideal of subsets of $A$
generated by graphs of continuous functions. We say that a
subset $X$ of $A$ is \textit{$I$-positive} if $X\not\in I$.

\begin{lemma}\label{lemma1}
  If $X\subseteq A$ is $I$-positive, then
  \begin{enumerate}
  \item $\ddot X$ is uncountable,
  \item there exists $I$-positive $Y\subseteq X$ such that
    $Y$ is relatively closed in $X$ and $Y\subsetnwd X$.
  \end{enumerate}
\end{lemma}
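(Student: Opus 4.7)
My plan is to prove (1) by a direct decomposition and (2) by a Lindel\"of reduction followed by a fusion in the remaining ``locally thick'' case.

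For (1), if $\ddot X = \{y_n : n < \omega\}$ were countable, then since $A$ is a function, $X = \bigcup_n (X \cap (\dot H \times \{y_n\}))$, and each slice $X \cap (\dot H \times \{y_n\})$ is the graph of the constant function $y_n$ defined on $\{x \in \dot X : A(x) = y_n\}$, which is continuous and hence lies in $I$. Therefore $X \in I$, contradicting $I$-positivity.

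For (2), the first step is to set $V := \bigcup\{U \subseteq \dot H \text{ open} : X \cap (U \times \ddot H) \in I\}$. By second-countability of $\dot H$, $V$ is a countable union of such $U$'s, so $X \cap (V \times \ddot H) \in I$ and $X_1 := X \setminus (V \times \ddot H)$ is $I$-positive and relatively closed in $X$. Moreover, for every open $U$ with $U \cap \dot{X_1} \neq \emptyset$ one has $X_1 \cap (U \times \ddot H) \notin I$ (otherwise $U \subseteq V$), and by (1) its vertical projection is uncountable.

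The candidate $Y := X_1$ works in the easy case when $V \cap \dot X$ is dense in $\dot X$: then $\dot Y = \dot X \setminus V$ and $\dot X \setminus \dot Y = V \cap \dot X$ is dense in $\dot X$, which gives $\dot Y \subseteq \cl(\dot X \setminus \dot Y)$. Otherwise, pick an open $U_0$ witnessing non-density, that is $U_0 \cap \dot X \neq \emptyset$ but $U_0 \cap V \cap \dot X = \emptyset$; after restricting $X$ to $X \cap (U_0 \times \ddot H)$, we may assume the locally thick case, in which every open $U$ meeting $\dot X$ gives $X \cap (U \times \ddot H) \notin I$.

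The main obstacle is this locally thick case. My plan is to build, by fusion indexed by a countable basis $\{U_n\}$ of $\dot H$, a decreasing sequence of closed sets $\dot H = F_0 \supseteq F_1 \supseteq \cdots$ with $X \cap (F_n \times \ddot H) \notin I$ at each stage, arranging $F_n$ relative to $U_n$ so as to push $\dot X \cap F_n^c$ toward being dense in $\dot X$. Taking $F := \bigcap_n F_n$ then yields a closed set with $F \cap \dot X$ of empty interior in $\dot X$, and $Y := X \cap (F \times \ddot H)$ is the desired set. The hard point is maintaining $I$-positivity through the fusion: in the thick case no full cylinder $U' \times \ddot H$ can be deleted, so at each stage one must use the uncountable vertical projections supplied by (1) to pick a refinement of $F_{n-1}$ that shrinks in the first coordinate while still retaining a vertical value not yet covered by the continuous pieces accumulated so far.
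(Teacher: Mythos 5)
Your part (1) is fine and matches the paper's view that it is immediate. Your part (2), however, has a genuine gap exactly at the point you flag as ``the hard point.''

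The Lindel\"of reduction to the ``locally thick'' case is correct: setting $V$ to be the union of all basic open $U$ with $X\cap(U\times\ddot H)\in I$, the set $X_1 = X\setminus(V\times\ddot H)$ is $I$-positive, relatively closed, and locally thick, and if $V\cap\dot X$ happened to be dense you would be done. But in the locally thick case you only sketch a fusion, and this fusion does not go through as described. Maintaining $I$-positivity across a countable decreasing intersection $\bigcap_n F_n$ is not automatic for an arbitrary $\sigma$-ideal: even if each $X\cap(F_n\times\ddot H)$ is $I$-positive, nothing prevents the intersection from falling into $I$. Your suggestion to ``retain a vertical value not yet covered by the continuous pieces accumulated so far'' is not available: at each stage you are only shrinking the horizontal domain $F_n$, so there are no ``continuous pieces accumulated'' to dodge, and at the end you would have to contend with all possible countable covers by continuous graphs, not just finitely many chosen along the way. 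Uncountability of vertical projections (your appeal to part (1)) does not yield $I$-positivity either. To make a fusion work here you would essentially need to re-invent a notion of a tree of witnesses of $I$-positivity with a fusion stability property --- machinery the paper never requires at this point.

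The paper avoids all of this with a single observation: because $A$ has analytic graph, the function $X:\dot X\to\ddot X$ is Baire measurable (relatively in $\dot X$), so there is a relatively dense $\gdelta$ set $G\subseteq\dot X$ on which $X$ is continuous. Writing $\dot X\setminus G=\bigcup_n F_n$ with each $F_n$ relatively closed nowhere dense, $\sigma$-additivity of $I$ forces some $X\!\upharpoonright\! F_n=X\cap(F_n\times\ddot H)$ to be $I$-positive, and this $Y$ is relatively closed in $X$ with $\dot Y=F_n$ nowhere dense in $\dot X$, hence $Y\subsetnwd X$. This uses the hypothesis that $A$ is a \emph{function with analytic graph} in an essential way, which your argument never exploits. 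I would suggest replacing the fusion sketch with this Baire-measurability argument, or, if you prefer your two-case structure, applying the Baire-measurability argument inside the locally thick case to close the gap.
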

\begin{proof}
  The first part is obvious. Note that $X:\dot
  X\rightarrow\ddot X$ is a Baire measurable function, so
  $\dot X$ has a dense $\gdelta$ (relative) subset $G$ such
  that $X\har G$ is continuous. Write $\dot X\setminus
  G=\bigcup_{n<\omega}F_n$, where each $F_n$ is closed
  nowhere dense (relative) subset of $\dot X$. Now
  $$X=X\har G\quad\cup\quad\bigcup_{n<\omega} X\har F_n.$$
  Since $X\har G\in I$, at least one $X\har F_n$ is
  $I$-positive. Note that $X\har F_n=X\cap(F_n\times\ddot
  H)$ is relatively closed in $X$.
\end{proof}

\begin{lemma}\label{lemma2}
  Let $\varepsilon>0$ and $\dot X\subseteq\dot H$. There is
  $K\in\omega$ such that for any sequence $\langle\dot
  X_k\subseteq\dot X:k<K\rangle$ such that for $k\not=k'<K$
  $$\dot X_k\cup \dot X_{k'}\mbox{ is dense in }\dot X$$
  there is $k<K$ such that $$|\dot X_k,\dot
  X|<\varepsilon.$$
\end{lemma}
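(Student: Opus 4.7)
The plan is a straightforward compactness-plus-pigeonhole argument. Because $\dot H$ is compact, I can choose a finite cover $U_1,\dots,U_N$ of $\dot H$ by open balls of radius $\varepsilon/4$, where $N$ depends only on $\varepsilon$. I claim that $K=N+1$ suffices.

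First I would unpack what it means for the conclusion $|\dot X_k,\dot X|<\varepsilon$ to fail. Since $\dot X_k\subseteq \dot X$, the Hausdorff distance reduces to $\sup_{x\in\dot X}|x,\dot X_k|$, so failure means this sup is $\geq\varepsilon$, and in particular there is some $y_k\in \dot X$ with $|y_k,\dot X_k|>\varepsilon/2$. Pick an index $i_k\in\{1,\dots,N\}$ with $y_k\in U_{i_k}$.

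The crucial observation is that $U_{i_k}$ witnesses a failure of density: for any $z\in U_{i_k}$ we have $|z,y_k|<\varepsilon/2$ and hence $|z,\dot X_k|>0$, so $U_{i_k}\cap\dot X_k=\emptyset$; at the same time $y_k\in U_{i_k}\cap\dot X$, so $U_{i_k}$ meets $\dot X$. Now if $K=N+1$ sets $\dot X_k$ were all to fail the conclusion, then by pigeonhole two distinct indices $k\neq k'$ would have $i_k=i_{k'}=i$, giving an open set $U_i$ that meets $\dot X$ but is disjoint from $\dot X_k\cup \dot X_{k'}$. This contradicts the hypothesis that $\dot X_k\cup \dot X_{k'}$ is dense in $\dot X$.

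I do not expect a real obstacle here; the argument is essentially total boundedness of $\dot H$ together with pigeonhole. The only thing requiring minor care is the choice of radius: one wants two points in the same ball of the cover to be closer than $\varepsilon/2$, so that the gap $|y_k,\dot X_k|>\varepsilon/2$ is large enough to swallow the whole ball. Radius $\varepsilon/4$ accomplishes this cleanly, and any strictly smaller radius would work equally well.
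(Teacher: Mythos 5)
Your argument is correct and is essentially the same as the paper's: both extract from compactness a finite collection of "witnesses'' of size depending only on $\varepsilon$ (the paper takes a finite $\varepsilon$-net inside $\dot X$, you take a finite cover of $\dot H$ by $\varepsilon/4$-balls) and then apply pigeonhole to two failing indices to contradict the density of $\dot X_k\cup\dot X_{k'}$. The only cosmetic difference is that the paper first replaces each $\dot X_k$ by its relative closure so the density hypothesis becomes $\dot X_k\cup\dot X_{k'}=\dot X$, while you argue directly with density via the open balls.
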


\begin{proof}

  Using compactness of $\dot H$, find finite set $\dot
  Y\subseteq \dot X$ such that $|\dot Y,\dot X|<\varepsilon
  $. Let $K$ be any number greater than the cardinality of
  $\dot Y$.

  Since $|\dot X_k,\cl\dot X_k|=0$, without loss of
  generality assume that all $\dot X_k$ are closed and for
  $k\not=k'$ we have $\dot X_k\cup\dot X_{k'}=\dot X$. We
  search for $k$ such that $\dot Y\subseteq \dot X_k$. If
  every $k$ fails at some point of $\dot Y$, then there are
  $k\not=k'$ that fail at the same point. This violates
  $\dot X_k\cup\dot X_{k'}=\dot X$.
\end{proof}

\subsection{Projection and ordering}

For $\sigma\in\btree$ write $\lh\sigma$ for the length of
$\sigma$ and $\sigma^*$ for $\sigma$ with the last digit
removed if $\sigma\not=\emptyset$ (and let
$\emptyset^*=\emptyset$).

For $n<\omega$ we define the projection function
$$\omega^n\ni\sigma\mapsto\sigma'\in\omega^n$$ as
follows: the first largest digit of $\sigma$ which is $\geq
n$ is changed to $0$; if there is no such digit, we put
$\sigma'=\sigma$.

Write $\sigma^1=\sigma'$ and $\sigma^{i+1}=(\sigma^i)'$.
Note that for $\sigma\in\omega^n$
\begin{itemize}
\item $(\sigma^n)'=\sigma^n$,
\item $|\sigma,\sigma'|\leq2^{-n}\cdot2^{-i}$ if the change
  occures at $i$-th digit,
\item
  $|\sigma,\sigma''|=|\sigma,\sigma'|+|\sigma',\sigma''|$,
  since the consecutive changes occure at different places,
\item $|\sigma,\sigma^n|\leq2\cdot2^{-n}$
\end{itemize}
Fix an ordering $\preceq$ of $\btree$ into type $\omega$
such that $\sigma'\preceq\sigma$ and $\sigma^*\preceq\sigma$
for each $\sigma\in\btree$. Write $\#\sigma$ for the number
indicating the position of $\sigma$ with respect to
$\preceq$. Note that $\#\emptyset=0$, $\#\langle0\rangle=1$
and $\lh\sigma\leq\#\sigma$

\subsection{Solids}

We call a nonempty analytic $X\subseteq A$ \textit{solid} if
for all open $U\subseteq \dot H\times \ddot H$ either $U\cap
X=\emptyset$, or else $U\cap X$ is $I$-positive. Note that
every analytic $I$-positive set contains a solid. If $X$ is
a solid, then for each open set $U\subseteq \dot
H\times\ddot H$ either $U\cap X=\emptyset$, or else $U\cap
X$ is solid.

Without loss of generality we assume that \textbf{$A$ is solid}.

\subsection{Trees}

For a tree $T$ on $\omega\times\omega$ write
$\proj[T]=\{\sigma\in\btree:\exists\tau\ (\sigma,\tau)\in
T\}$.  For $\sigma\in\btree$ write
$T_\sigma=\{\tau:(\sigma,\tau)\in T\}$ and for $s\in\baire$
write $T_s=\bigcup_{n<\omega} T_{s\upharpoonright n}$. If
$\{X_{\rho,\tau}: (\rho,\tau)\in T\}$ is a family of sets
and $\sigma\in \btree$, put
$$X_\sigma=\bigcup\{X_{\sigma,\tau}:\tau\in T_\sigma\}.$$

\begin{lemma}\label{lemma3}
  Suppose $T$ is a tree on $\omega\times\omega$ such that
  for each $s\in\baire$ the tree $T_s$ is finitely branching
  and there is exactly one branch $\psi(s)\in\lim T_s$. Then
  the map $\psi:\baire\rightarrow\baire$ is continuous.
\end{lemma}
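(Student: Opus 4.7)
The plan is, given any $s\in\baire$ and any $n<\omega$, to locate a level $M\geq n$ such that every $t\in\baire$ agreeing with $s$ on the first $M$ coordinates satisfies $\psi(t)\upharpoonright n=\psi(s)\upharpoonright n$. This is exactly continuity of $\psi$ at $s$ in the Baire topology, so once such an $M$ is produced we are done. The main tool will be K\"onig's lemma, applied to the ``off-branch'' part of the tree $T_s$.

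Fix $s$ and $n$. Since $T_s$ is finitely branching and $T_{s\upharpoonright n}$ is its $n$-th level, the set $F=T_{s\upharpoonright n}\setminus\{\psi(s)\upharpoonright n\}$ is finite. For each $\tau\in F$ consider the subtree $T_s\cap[\tau]$ (nodes of $T_s$ extending $\tau$): it is finitely branching, and because $\psi(s)$ is the unique branch of $T_s$ and $\tau\not\subseteq\psi(s)$, it has no infinite branch. K\"onig's lemma forces $T_s\cap[\tau]$ to be finite. Taking $M\geq n$ greater than the maximum length of any node in $\bigcup_{\tau\in F}(T_s\cap[\tau])$, we ensure that every element of $T_{s\upharpoonright M}$ extends $\psi(s)\upharpoonright n$, i.e.\ $T_{s\upharpoonright M}\subseteq [\psi(s)\upharpoonright n]$.

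The final step is to notice that $T_\sigma$ depends only on $\sigma$ itself. Hence if $t\upharpoonright M=s\upharpoonright M$ then $T_{t\upharpoonright M}=T_{s\upharpoonright M}\subseteq [\psi(s)\upharpoonright n]$, and since $\psi(t)\upharpoonright M\in T_{t\upharpoonright M}$ we conclude $\psi(t)\upharpoonright n=\psi(s)\upharpoonright n$. I do not expect any serious obstacle here: the argument is essentially a one-shot application of K\"onig's lemma, and the only point requiring care is keeping straight the distinction between the ambient tree $T$ on $\omega\times\omega$ and the finitely branching section trees $T_s$ on $\omega$, together with the observation that the $k$-th level $T_{s\upharpoonright k}$ of $T_s$ is determined by the initial segment $s\upharpoonright k$ alone.
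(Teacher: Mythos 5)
Your proof is correct and rests on the same core ideas as the paper's: K\"onig's lemma applied to the part of $T_s$ lying off the unique branch $\psi(s)$, together with the observation that the level $T_{s\upharpoonright m}$ is determined by $s\upharpoonright m$ alone. The only difference is organizational — you argue directly by producing an explicit modulus $M$, while the paper runs the same König argument as a proof by contradiction on the single tree $T^*=\{\tau\in T_s:\tau\not\supseteq\psi(s)\upharpoonright n\}$.
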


\begin{proof}
  Let $t=\psi(s)$ and fix $n<\omega$. We need to find $m\geq
  n$ such that $$\psi\big[[s\har m]\big]\subseteq[t\har
  n].$$ Suppose towards a contradiction that for each $m\geq
  n$ there is $s_m\supseteq s\har m$ such that
  $\psi(s_m)\not\supseteq t\har n$. Consider the tree
  $$T^*=\{\tau\in T_s:\tau\not\supseteq t\har n\}.$$ Then
  for each $m\geq n$ we have
  \begin{itemize}
  \item $\psi(s_m)\har m\in T_{s_m\upharpoonright
      m}=T_{s\upharpoonright m}\subseteq T_s$,
  \item $\psi(s_m)\har m\not\supseteq t\har n$, i.e.
    $\psi(s_m)\har m\in T^*$.
  \end{itemize}
  So $T^*$ is an infinite finitely branching tree. Pick
  $t^*\in\lim T^*$. Then $t^*\in\lim T_s$ but $t^*\not=t$, a
  contradiction.
\end{proof}

\subsection{Cylinders}

By a \textit{cylinder with base $X$} we mean a closed set
$\tilde X\subseteq(\baire)^\omega\times \dot H\times\ddot H$
such that $\pi[\tilde X]=X$ and there exists $N<\omega$ such
that for each $x\in X$ and for each $r,s\in(\baire)^\omega$
  $$\big( s\har N=r\har N\
  \wedge\ (s,x)\in\tilde X\big)\Rightarrow (r,x)\in\tilde
  X.$$ In this case we say that \textit{$X$ is unfolded to
    $\tilde X$}.

Note that the base of a cylinder is analytic and that every
analytic subset of $A$ can be unfolded to a cylinder. A
cylinder is \textit{solid} if its base is solid. For the
rest of the proof fix a solid cylinder $\tilde A$ with base
$A$.

\begin{lemma}\mbox{}
  \begin{itemize}
  \item[(a)] Given a cylinder $\tilde X$ with base $X$, any
    analytic $Y\subseteq X$ can be unfolded to a cylinder
    $\tilde Y\subseteq \tilde X$.
  \item[(b)] Given $\varepsilon>0$, solid cylinder $\tilde
    X$ and an analytic $I$-positive $Y\subseteq X$, there is
    a solid $Z\subsetnwd Y$ that can be unfolded to a
    cylinder $\tilde Z\subseteq\tilde X$ such that $|\tilde
    Z|<\varepsilon$.
  \end{itemize}
\end{lemma}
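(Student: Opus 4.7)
For part (a), the plan is to use the analytic representation $Y = \proj[F]$ for some closed $F \subseteq \baire \times \dot H \times \ddot H$ and graft $F$ onto the cylinder $\tilde X$ via one fresh $(\baire)^\omega$-coordinate. Let $N$ be the cylinder constant of $\tilde X$, and set
$$\tilde Y = \{(s, \dot x, \ddot x) \in \tilde X : (s_N, \dot x, \ddot x) \in F\}.$$
This is closed, contained in $\tilde X$, and has base $Y$: given $(\dot x, \ddot x) \in Y$, pick a witness $t \in \baire$ with $(t, \dot x, \ddot x) \in F$ and any $s' \in (\baire)^\omega$ with $(s', \dot x, \ddot x) \in \tilde X$; by the cylinder property of $\tilde X$ we may freely replace the $N$-th coordinate of $s'$ by $t$, and the modified tuple lies in $\tilde Y$. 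The extra constraint involves only $s_N$, so $\tilde Y$ has cylinder constant $N+1$.

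For (b), first apply (a) to get a cylinder $\tilde Y \subseteq \tilde X$ with base $Y$. Then cover $\tilde Y$ by countably many basic open sets of the form
$$B = C_0 \times \cdots \times C_{M-1} \times (\baire)^\omega \times V_1 \times V_2,$$
where $C_i \subseteq \baire$, $V_1 \subseteq \dot H$, $V_2 \subseteq \ddot H$ are basic open. Because the chosen metric on $(\baire)^\omega$ metrizes the product topology, for each such $B$ we can arrange $|B| < \varepsilon$ by taking $M$ large and the $C_i, V_1, V_2$ sufficiently fine. Since $Y = \bigcup_B \pi[\tilde Y \cap B]$ and $I$ is a $\sigma$-ideal, some $Y' := \pi[\tilde Y \cap B]$ is $I$-positive. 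Crucially, $\tilde Y \cap B$ is itself a cylinder inside $\tilde X$ with base $Y'$, since its defining conditions only involve the first $\max(N+1, M)$ coordinates of $(\baire)^\omega$.

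Next, apply Lemma \ref{lemma1}(2) to the analytic $I$-positive set $Y'$ to obtain an $I$-positive $W \subseteq Y'$, relatively closed in $Y'$, with $W \subsetnwd Y'$. Since $Y' \subseteq Y$ we have $\dot W \subseteq \cl(\dot{Y'} \setminus \dot W) \subseteq \cl(\dot Y \setminus \dot W)$, hence $W \subsetnwd Y$. Picking any solid $Z \subseteq W$ (which exists because $W$ is analytic and $I$-positive) gives $\dot Z \subseteq \dot W \subseteq \cl(\dot Y \setminus \dot W) \subseteq \cl(\dot Y \setminus \dot Z)$, so $Z \subsetnwd Y$. Finally, apply (a) to the analytic set $Z$ with $\tilde X$ replaced by the small-diameter cylinder $\tilde Y \cap B$ to obtain $\tilde Z \subseteq \tilde Y \cap B \subseteq \tilde X$ with $|\tilde Z| \leq |B| < \varepsilon$. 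The main place for care is the diameter estimate on $B$, where we must simultaneously prescribe enough of the $(\baire)^\omega$-coordinates to make the tail of the metric negligible while keeping the set cylindrical; the rest is bookkeeping around the cylinder property, the $\sigma$-ideal $I$, and the transitivity of $\subsetnwd$ along inclusions.
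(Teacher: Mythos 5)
Your argument is essentially the paper's, in both parts. For (a) you use the same grafting trick: reserve the $N$-th $\baire$-coordinate (free by the cylinder property) to carry the witness from a closed set projecting onto $Y$, giving cylinder constant $N+1$. For (b) the paper does the two shrinking steps in the opposite order (first pass to a $\subsetnwd$-small $Y_0$ via Lemma~\ref{lemma1}(2), then cut its unfolding into small cylinders and pick an $I$-positive piece); you first cut to a small piece $Y'$, then apply Lemma~\ref{lemma1}(2) inside it. Both orders work because $\subsetnwd$ is preserved under the inclusions you use, and you verify this correctly.

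One slip to tidy up: you take the pieces $B$ to be basic \emph{open} sets, but the paper's cylinders are required to be closed, so $\tilde Y\cap B$ as written is not a cylinder and (a) cannot be applied to it directly. This is cosmetic: take the $C_i$ clopen in $\baire$ (possible since $\baire$ is zero-dimensional) and $V_1,V_2$ closed balls in the Hilbert cube, or simply replace $B$ by $\cl B$, which in a metric space has the same diameter. Then $\tilde Y\cap B$ is a genuine closed cylinder of diameter $<\varepsilon$ with cylinder constant $\max(N+1,M)$, and the rest of your bookkeeping goes through.
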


\begin{proof}
  (a) Fix closed $D\subseteq\baire\times \dot H\times\ddot
  H$ that projects onto $Y$. Let $N<\omega$ witness that
  $\tilde X$ is a cylinder. Define $\tilde Y$ by
  $$(x,z)\in\tilde Y\quad\mbox{iff}\quad (s,z)\in\tilde X\
  \wedge\ (s(N),z)\in D.$$

  (b) By Lemma \ref{lemma1} find analytic $I$-positive
  $Y_0\subsetnwd Y$. Unfold $Y_0$ to a cylinder $\tilde
  Y_0\subseteq\tilde X$ and write $\tilde Y_0$ as a
  countable union of cylinders of diameter less than
  $\varepsilon$. At least one of them, say $\tilde Y$, has
  $I$-positive base. Shrink this base to a solid $Z$ and
  unfold $Z$ to a solid cylinder $\tilde Z\subseteq \tilde
  Y$.
\end{proof}

\subsection{Solid trees}

Given a finite tree $S$ on $\omega\times\omega$, call a
family $$\langle \tilde X_{\sigma,\tau}:\ (\sigma,\tau)\in
S\rangle$$ of solid cylinders a \textit{solid tree} if
\begin{itemize}
\item $\dot X_{\sigma,\tau}\subseteq\cl\dot X_{\sigma^*,\tau^*}$,
\item $\ddot X_{\sigma,\tau}$ are pairwise disjoint
  and relatively open in $\bigcup_{(\sigma,\tau)\in S}\ddot
  X_{\sigma,\tau}$.
\end{itemize}

\begin{lemma}\label{lemma5}
  Let $\varepsilon>0$ and $\langle\tilde
  X_{\sigma,\tau}:(\sigma,\tau)\in S\rangle$ be a solid
  tree. Suppose $\eta\not\in\proj[S]$ and
  $\eta^*\in\proj[S]$. Let $Y', Y\subseteq \dot H\times
  \ddot H$ be such that $\dot Y'\subseteq \cl\dot Y$. Then
  there exists a solid tree $\langle\tilde
  X'_{\sigma,\tau}:(\sigma,\tau)\in S'\rangle$ such that
  $S'\supseteq S$, $\proj[S']=\proj[S]\cup\{\eta\}$ and
  \begin{enumerate}
  \item for $\tau\in S'_\eta$ we have $|\tilde
    X'_{\eta,\tau}|<\varepsilon$,
  \item if $(\sigma,\tau)\in S$, then $\tilde
    X'_{\sigma,\tau}\subseteq\tilde X_{\sigma,\tau}$,
  \item if $\sigma\in\proj[S]$, then $|\dot X'_\sigma,\dot
    X_\sigma|<\varepsilon$,
  \item $|X'_\eta,Y'|<|X_{\eta^*},Y|+\varepsilon$.
  \end{enumerate}
\end{lemma}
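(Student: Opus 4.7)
\medskip

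\noindent\textbf{Plan.} The plan is to extend $S$ by finitely many new nodes $(\eta,\tau'_i)$ corresponding to a finite net of $Y'$, to build small solid cylinders above them targeting those net points, and to shrink each existing $\tilde X_{\sigma,\tau}$ compatibly so that all solid-tree conditions survive.

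First, by compactness of $\dot H\times\ddot H$, I would fix a finite $\varepsilon/3$-net $\{y'_1,\dots,y'_L\}\subseteq Y'$. For each $i$, use $\dot Y'\subseteq\cl\dot Y$ to select $y_i\in Y$ with $|\dot y_i,\dot y'_i|$ arbitrarily small, and then the Hausdorff bound $|X_{\eta^*},Y|$ to find $x_i\in X_{\eta^*}$ with $|x_i,y_i|\leq|X_{\eta^*},Y|$; let $\tau^\circ_i\in S_{\eta^*}$ be the unique child with $x_i\in X_{\eta^*,\tau^\circ_i}$. For each $i$ I would then build a small solid cylinder $\tilde X'_{\eta,\tau^\circ_i\frown\langle k_i\rangle}$ with pairwise distinct large $k_i$, diameter less than $\varepsilon$, whose base contains a point close to $(\dot x_i,\ddot y'_i)$, with $\dot$-projection inside $\cl\dot X'_{\eta^*,\tau^\circ_i}$ and $\ddot$-projection disjoint from every existing $\ddot X_{\sigma,\tau''}$. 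This rests on solidity of $\tilde A$: any nonempty open condition inside a solid contains an $I$-positive solid sub-cylinder.

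Simultaneously I would shrink each $\tilde X_{\sigma,\tau}$ to a solid sub-cylinder $\tilde X'_{\sigma,\tau}$ whose $\ddot$-projection avoids the small union of new $\ddot$-projections, and whose $\dot$-projection is $\varepsilon$-close to $\dot X_{\sigma,\tau}$ in Hausdorff distance. The latter is where Lemma 2 enters: produce $K$ candidate shrinkings of $\dot X_{\sigma,\tau}$ (obtained by combining small-diameter cylinders via Lemma 1(2) and its $\subsetnwd$-subsets) whose pairwise unions are dense in $\dot X_{\sigma,\tau}$, and Lemma 2 forces one candidate to be Hausdorff-$\varepsilon$-close. Conditions (1) and (2) are built into the construction, (3) follows from the Lemma 2 choice, and (4) follows from the net approximation and the triangle inequality $|x,y'_i|\leq|x,x_i|+|x_i,y_i|+|y_i,y'_i|$ applied to any point $x$ in the new cylinder placed near $x_i$.

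The main obstacle is the mutual dependence between the new cylinders and the shrinkings of the parents: the $\dot$-projections of the new $\tilde X'_{\eta,\tau^\circ_i\frown\langle k_i\rangle}$ must lie in $\cl\dot X'_{\eta^*,\tau^\circ_i}$ rather than merely in $\cl\dot X_{\eta^*,\tau^\circ_i}$, yet the shrinking $\dot X'_{\eta^*,\tau^\circ_i}$ is supposed to avoid the $\ddot$-projections of those very new cylinders. I would resolve this by first fixing the small new cylinders and then selecting the shrinking $\tilde X'_{\eta^*,\tau^\circ_i}$ to be $\dot$-dense in $\dot X_{\eta^*,\tau^\circ_i}$ near each new cylinder's $\dot$-projection, using the $\subsetnwd$-subsets supplied by Lemma 1(2) so that the closure containment is maintained. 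Once this simultaneous construction is in place, the verification of the solid-tree properties (pairwise disjointness of the $\ddot$-projections relatively open in their union, and $\dot$-closure containment along $\star$) is routine, and then $S'=S\cup\{(\eta,\tau^\circ_i\frown\langle k_i\rangle):i\leq L\}$ with the new and shrunken cylinders gives the required extension.
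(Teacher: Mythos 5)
Your overall shape matches the paper's: use a finite approximating set, build small solid cylinders to serve as $\tilde X'_{\eta,\tau}$, shrink the existing cylinders, and invoke Lemma~\ref{lemma2} to ensure one of $K$ candidate shrinkings is Hausdorff-close. But the order of quantifiers in your resolution of the circularity is backwards in a way that breaks the argument, and you omit the one device that makes Lemma~\ref{lemma2} applicable.

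You propose to fix the new cylinders $\tilde X'_{\eta,\tau}$ first, and then produce $K$ candidate shrinkings of each $\tilde X_{\sigma,\tau}$ all avoiding the (now fixed) $\ddot$-projections of the new cylinders. The problem is that once the $\ddot$-set to be avoided is fixed, the set of $\dot$-points of $X_{\xi,\tau}$ that must be excised is also fixed (for each $\dot x$ in the domain there is a single $\ddot x$, so excising a $\ddot$-ball excises its full $\dot$-preimage). That $\dot$-preimage may well contain a nonempty relatively open subset of $\dot X_{\xi,\tau}$, in which case no shrinking avoiding it can be Hausdorff-$\varepsilon$-close to $\dot X_{\xi,\tau}$ once $\varepsilon$ is small — the pairwise-density hypothesis of Lemma~\ref{lemma2} will simply fail, since all $K$ candidates miss the same fixed open patch. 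Nothing in ``combining small-diameter cylinders via Lemma~1(2)'' repairs this; the $\subsetnwd$ condition controls the $\dot$-projection of the new cylinder itself, not the $\dot$-preimage of its $\ddot$-projection, and that preimage is what the shrinking must avoid.

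The paper instead chooses $K$ \emph{independent} finite $\varepsilon$-dense families $\{a^k_{\tau,l}\}_{l<L}\subseteq X_{\xi,\tau}$, $k<K$, with \emph{all} $\ddot a^k_{\tau,l}$ pairwise at distance $>\delta$. Candidate shrinking $k$ excises the $\delta/3$-balls around $\ddot a^k_{\tau,l}$ only, and candidate new cylinders for $\eta$ are taken near the $a^k_{\tau,l}$; both depend on $k$. Because the $\ddot$-balls for $k\ne k'$ are disjoint and $X_{\xi,\tau}$ is a function, $X^k_{\xi,\tau}\cup X^{k'}_{\xi,\tau}=X_{\xi,\tau}$, which (after propagating up the tree via $\dot X^k_{\sigma,\tau}=\mathrm{int}_{\dot X_{\sigma,\tau}}\cl\dot X^k_{\sigma^*,\tau^*}$, the step you leave entirely implicit) is exactly the pairwise-density hypothesis of Lemma~\ref{lemma2}. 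Lemma~\ref{lemma2} then selects the good $k$, and only then are the $\eta$-cylinders committed to. In short: the new cylinders must be the \emph{last} thing you choose, not the first, and the $K$-fold independent $\ddot$-separated net is indispensable for Lemma~\ref{lemma2} to have any bite.

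Two smaller points. First, your estimate for (4) uses the approximation $|x_i,y_i|\le|X_{\eta^*},Y|$ in the full metric on $\dot H\times\ddot H$, but the construction can only control $\dot$-distances (the $\ddot$-coordinates of the $a$'s are deliberately spread out); the paper's item (4) is in fact used only for the $\dot$-projections. Second, your sketch does not address how the solid-tree condition $\dot X'_{\sigma,\tau}\subseteq\cl\dot X'_{\sigma^*,\tau^*}$ is restored at nodes $\sigma\supsetneq\eta^*$ after shrinking; the paper's $\mathrm{int}\,\cl$ propagation and the accompanying Claim about $\cl\dot X^k_{\sigma,\tau}\cup\cl\dot X^{k'}_{\sigma,\tau}=\cl\dot X_{\sigma,\tau}$ are doing real work there.
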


\begin{proof}
  Denote $\eta^*$ by $\xi$. Fix large $K<\omega$. Find
  $L<\omega$ and $a^k_{\tau,l}\in X_{\xi,\tau}$ (for
  $l<L,k<K,\tau\in S_\xi$) such that
  \begin{itemize}
  \item $|\{\dot a^k_{\tau,l}:l<L\},\dot
    X_{\xi,\tau}|<\varepsilon$ for each $k,\tau$ (use
    compactness of $\dot H$),
  \item all $\ddot a^k_{\tau,l}$ are distinct and at
    distance $>\delta$ for some fixed $\delta>0$.
  \end{itemize}
  For $l<L$ find a solid cylinder $\tilde
  X^k_{\xi,\tau,l}\subseteq\tilde X_{\xi,\tau}$ of diameter
  less than $\varepsilon$ such that
    $$X^k_{\xi,\tau,l}\subsetnwd \{x\in X_{\xi,\tau}: |\ddot
    x,\ddot a^k_{\tau,l}|<\delta/4\}.$$ Next, find
    $L^k_\tau\subseteq L$ such that $$|\{\dot
    a^k_{\tau,l}:l\in L^k_\tau\},\dot Y'|<|\{\dot
    a^k_{\tau,l}:l\in L\},\dot Y|+\varepsilon$$ ($\dot
    Y'\subseteq \cl\dot Y$ is used here). Now unfold
    $$X^k_{\xi,\tau}=\{x\in X_{\xi,\tau}:\forall l<L\ 
    |\ddot x,\ddot a^k_{\tau,l}|>\delta/3\}$$ to a solid
    cylinder $\tilde X^k_{\xi,\tau}\subseteq \tilde
    X_{\xi,\tau}$.

    For $(\sigma,\tau)\in S$ such that $\xi\subsetneq\sigma$
    find a solid cylinder $\tilde
    X^k_{\sigma,\tau}\subseteq\tilde X_{\sigma,\tau}$ such
    that $$\dot X^k_{\sigma,\tau}=\int_{\dot
      X_{\sigma,\tau}}\cl\dot X^k_{\sigma^*,\tau^*}.$$

    \begin{claim}
      If $(\sigma,\tau)\in S$, $\xi,\subseteq\sigma$ and
      $k'\not=k$, then $$\cl\dot X_{\sigma,\tau}=\cl\dot
      X^k_{\sigma,\tau}\cup\cl\dot X^{k'}_{\sigma,\tau}.$$
    \end{claim}

    \begin{proof}[Proof of Claim]
      Clearly $\dot X_{\xi,\tau}=\dot X_{\xi,\tau}^k\cup\dot
      X_{\xi,\tau}^{k'}$. Using $$\cl\dot X\cup\cl\dot
      Y=\cl\,\int\,\cl\dot X\cup\cl\,\int\,\cl\dot Y$$ we
      get
      \begin{eqnarray*}
        \cl\dot X_{\sigma,\tau}^k\cup\cl\dot
        X^{k'}_{\sigma,\tau}=\cl\,\int_{\dot
          X_{\sigma,\tau}}\,\cl\dot
        X^k_{\sigma^*,\tau^*}\ \cup\ \cl\,\int_{\dot
          X_{\sigma,\tau}}\,\cl\dot X^{k'}_{\sigma^*,\tau^*}\\=\cl\big[\cl_{\dot
          X_{\sigma,\tau}}\,\int_{\dot
          X_{\sigma,\tau}}\,\cl\dot
        X^k_{\sigma^*,\tau^*}\ \cup\ \cl_{\dot
          X_{\sigma,\tau}}\,\int_{\dot
          X_{\sigma,\tau}}\,\cl\dot
        X^{k'}_{\sigma^*,\tau^*}\big]\\=\cl\big[\cl_{\dot
          X_{\sigma,\tau}}\dot
        X^k_{\sigma^*,\tau^*}\ \cup\ \cl_{\dot
          X_{\sigma,\tau}}\dot
        X^{k'}_{\sigma^*,\tau^*}\big]=\cl\dot X_{\sigma^*,\tau^*}^k\cup\cl\dot
        X^{k'}_{\sigma^*,\tau^*}
      \end{eqnarray*}
    \end{proof}
    Now, using Lemma \ref{lemma2} we find $k\in K$ such that
    $$\forall(\sigma,\tau)\in S\quad  \xi\subseteq\sigma\quad\Rightarrow\quad
    |\dot X^k_{\sigma,\tau},\dot
    X_{\sigma,\tau}|<\varepsilon.$$ Let
    $S'=S\cup\{(\eta,\tau^\smallfrown l):l\in L^k_\tau,
    \tau\in S_\xi\}$ and put $$\tilde
    X'_{\eta,\tau^\smallfrown l}=\tilde X^k_{\xi,\tau,l}$$
    (for $l\in L^k_\tau$), for $(\sigma,\tau)\in S$ put
\begin{displaymath}
  \tilde X'_{\sigma,\tau} = \left\{ 
    \begin{array}{ll}
      \tilde X^k_{\sigma,\tau}& \mbox{if}\quad\xi\subseteq\sigma,\\
      \tilde X_{\sigma,\tau} & \mbox{if}\quad \xi\not\subseteq\sigma.
    \end{array} \right.
\end{displaymath}

\end{proof}

\subsection{Construction of $\dot\varphi$ and
  $\ddot\varphi$}\label{sec:Constr}

In order to define $\dot\varphi$ and $\ddot\varphi$ we
shall construct
\begin{itemize}
\item a tree $T$ on $\omega\times\omega$ such that
  $\proj[T]=\btree$ and $T_\sigma$ is finite for each
  $\sigma\in\btree$,
\item a Lusin scheme $\langle\tilde
  Z_{\sigma,\tau}:(\sigma,\tau)\in T\rangle$ with the
  vanishing diameter property of solid subcylinders of
  $\tilde A$ so that
  \begin{itemize}
  \item[(i)] the scheme $\langle\dot
    Z_{\sigma}:\sigma\in\proj[T]\rangle$ also has the
    vanishing diameter property,
  \item[(ii)] $\ddot Z_{\sigma,\tau}$ is relatively open in
    $\bigcup\{\ddot Z_\rho:\lh\rho=\lh\sigma\}$,
  \item[(iii)] $\forall\sigma\in\omega^n\
    \exists\varepsilon_\sigma>0\ \forall\rho\in\omega^n$ $$
    |\rho,\sigma|<\varepsilon_\sigma\quad\Rightarrow\quad|\dot
    Z_\rho,\dot Z_\sigma|<|\rho,\sigma|+2\cdot 2^{-n}$$
  \end{itemize}
\end{itemize}

Suppose we have done this and let $\Phi:\lim
T\rightarrow\tilde A$,
$$\{\Phi(s,t)\}=\bigcap_{n<\omega}\tilde Z_{s\upharpoonright
  n, t\upharpoonright n}$$ be the associated map.

Note that for each $s\in\baire$ the tree $T_s$ is finitely
branching, so $\lim T_s\not=\emptyset$ and by $|\dot
Z_{s\upharpoonright n}|\rightarrow0$ there is exactly one
branch $\psi(s)\in \lim T_s$. By Lemma \ref{lemma3} the map
$\psi:\baire\rightarrow\baire$ is continuous and therefore
its graph is homeomorphic to $\baire$ via
$\bar\psi:\baire\rightarrow\lim T$,
$\bar\psi(s)=(s,\psi(s))$. Define
$\bar\Phi=\Phi\circ\bar\psi$.

Now, $\bar\Phi$ is continuous and $\pi\circ\bar\Phi$ maps
$\baire$ into $A$. Define $\dot\varphi=\dot\pi\circ\bar\Phi$
and $\ddot\varphi=\ddot\pi\circ\bar\Phi$. Note that
$A\circ\dot\varphi=\ddot\varphi=P\circ\ddot\varphi$ since
$\pi\circ\bar\Phi\subseteq A$.

\begin{claim}
  The map $\dot\varphi$ is a homeomorphic embedding from
  $\baire$ with the Cantor topology into $\dot A$.
\end{claim}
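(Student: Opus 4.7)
The plan is a three-part verification: continuity of $\dot\varphi$ from the Cantor topology, injectivity, and the topological-embedding property. Since $\baire$ with the Cantor topology is homeomorphic to $\cantor$ and therefore compact, while $\dot A\subseteq\dot H$ is Hausdorff, the third point is free: a continuous injection from a compact space into a Hausdorff space is automatically a closed topological embedding. So only continuity and injectivity need direct verification.

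For continuity at a point $s\in\baire$, the key tool is clause (iii) of Section~\ref{sec:Constr}, which is precisely the Lipschitz-type control on Hausdorff distances one would want. Given $\varepsilon>0$, I would first use the vanishing-diameter clause (i) to pick $n$ with $|\dot Z_{s\har n}|<\varepsilon/3$ and $2\cdot 2^{-n}<\varepsilon/3$, and set $\delta=\min(\varepsilon_{s\har n},\varepsilon/3)$. For any $r$ with $|s,r|<\delta$ in the Cantor metric, the series definition $|s,r|=\sum_{k}|s(k),r(k)|\cdot 2^{-k}$ gives $|s\har n,r\har n|\leq |s,r|<\varepsilon_{s\har n}$, so (iii) delivers $|\dot Z_{s\har n},\dot Z_{r\har n}|<|s,r|+2\cdot 2^{-n}$. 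Since $\dot\varphi(s)\in\dot Z_{s\har n}$ and $\dot\varphi(r)\in\dot Z_{r\har n}$, combining this Hausdorff-distance bound with the diameter of $\dot Z_{s\har n}$ yields $|\dot\varphi(s),\dot\varphi(r)|<\varepsilon$.

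For injectivity, I would work on the $\ddot A$-side, where clause (ii) together with the level-wise disjointness that is part of the Lusin-scheme property does the work, and then transfer back to the $\dot A$-side using the functional relation $A\circ\dot\varphi=\ddot\varphi$ recorded just before the claim. Given $s\neq r$, pick $n$ with $s\har n\neq r\har n$; then the pairs $(s\har n,\psi(s)\har n)$ and $(r\har n,\psi(r)\har n)$ are distinct nodes of $T$ at level $n$, so their associated sets $\ddot Z_{\,\cdot\,,\,\cdot\,}$ are disjoint and therefore $\ddot\varphi(s)\neq\ddot\varphi(r)$. If we had $\dot\varphi(s)=\dot\varphi(r)$, applying the (genuine) function $A$ to both sides would force $\ddot\varphi(s)=\ddot\varphi(r)$, a contradiction.

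The continuity estimate is where all the real content sits, but clauses (i) and (iii) of Section~\ref{sec:Constr} were engineered precisely to make it go through, so no serious obstacle remains; the only thing to be careful about when writing it up is invoking (iii) inside the radius $\varepsilon_{s\har n}$ where it is valid, and remembering that $|\cdot,\cdot|$ on $\omega^n$ is dominated by $|\cdot,\cdot|$ on $\baire$, which is immediate from the series definition of the Cantor metric.
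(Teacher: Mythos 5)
Your proof is correct and matches the paper's approach: the continuity estimate via clauses (i) and (iii) is exactly the paper's calculation, and the use of compactness of the Cantor topology to upgrade a continuous injection into an embedding is also the paper's argument. The only cosmetic difference is that the paper disposes of injectivity by observing that $\dot\varphi$ is the map associated with the Lusin scheme $\langle\dot Z_\sigma\rangle$ (whose level-wise disjointness follows from the disjointness of the $\ddot Z_{\sigma,\tau}$ together with $A$ being a function), whereas you make injectivity explicit by passing through $\ddot\varphi$ and the identity $A\circ\dot\varphi=\ddot\varphi$; both routes rest on the same disjointness built into the solid-tree construction.
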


\begin{proof}
  Note that the map $\dot\varphi$ is associated with the
  Lusin scheme $\langle \dot
  Z_\sigma:\sigma\in\btree\rangle$. Since the Cantor
  topology is compact, we only need to prove that
  $\dot\varphi$ is continuous. Fix $s\in\baire$ and large
  $n\in\omega$. Consider $r\in\baire$ such that
  $|r,s|<\varepsilon_{s\upharpoonright n}$. Then
  \begin{eqnarray*}
    |\dot\varphi(r),\dot\varphi(s)|\leq |\dot
    Z_{r\upharpoonright n},\dot Z_{\upharpoonright n}|+|\dot
    Z_{s\upharpoonright n}|<|r\har n, s\har
    n|+2\cdot2^{-n}+|\dot Z_{s\upharpoonright n}|\\\leq
    |r,s|+2\cdot 2^{-n}+|\dot Z_{s\upharpoonright n}|
  \end{eqnarray*}
\end{proof}

\begin{claim}
  The map $\ddot\varphi$ is a homeomorphic embedding from
  $\baire$ with the Baire topology into $\ddot A$.
\end{claim}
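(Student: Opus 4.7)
My plan is to establish three properties of $\ddot\varphi$ in order: continuity with respect to the Baire topology, injectivity, and openness onto its image (equivalently, continuity of the inverse). All three rest on combining the defining inclusion $\ddot\varphi(s)\in\ddot Z_{s\upharpoonright n,\psi(s)\upharpoonright n}$ with (a) continuity of $\psi$ from Lemma \ref{lemma3}, (b) the vanishing diameter property of the Lusin scheme, and (c) clause (ii) of the construction, which gives both disjointness and relative openness of the $\ddot$-cells at each level.

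For continuity at a point $s$, I would fix $n$ large enough that every cell $\tilde Z_{\sigma,\tau}$ with $\lh\sigma=n$ has $\ddot$-diameter less than a prescribed $\varepsilon$. Applying Lemma \ref{lemma3}, find $m\geq n$ such that $\psi\bigl[[s\upharpoonright m]\bigr]\subseteq[\psi(s)\upharpoonright n]$. Then any $r\in[s\upharpoonright m]$ satisfies $r\upharpoonright n = s\upharpoonright n$ and $\psi(r)\upharpoonright n=\psi(s)\upharpoonright n$, so $\ddot\varphi(r)$ and $\ddot\varphi(s)$ both lie in the single cell $\ddot Z_{s\upharpoonright n,\psi(s)\upharpoonright n}$ and their distance is at most $\varepsilon$.

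Injectivity is immediate from (ii): if $s\neq r$, take $n$ with $s\upharpoonright n\neq r\upharpoonright n$; then $\ddot\varphi(s)$ and $\ddot\varphi(r)$ belong to the two distinct cells $\ddot Z_{s\upharpoonright n,\psi(s)\upharpoonright n}$ and $\ddot Z_{r\upharpoonright n,\psi(r)\upharpoonright n}$, which (ii) declares to be disjoint. For openness onto the image, fix $s$ and $n$. By (ii), pick an open $V\subseteq\ddot H$ with $V\cap\bigcup\{\ddot Z_\rho:\lh\rho=n\}=\ddot Z_{s\upharpoonright n,\psi(s)\upharpoonright n}$. Since $\ddot\varphi[\baire]\subseteq\bigcup\{\ddot Z_\rho:\lh\rho=n\}$, any $r$ with $\ddot\varphi(r)\in V$ actually lies in $\ddot Z_{s\upharpoonright n,\psi(s)\upharpoonright n}$; combined with $\ddot\varphi(r)\in\ddot Z_{r\upharpoonright n,\psi(r)\upharpoonright n}$ and the disjointness clause of (ii), this forces $r\upharpoonright n=s\upharpoonright n$. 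Thus $\ddot\varphi^{-1}\bigl[V\cap\ddot\varphi[\baire]\bigr]\subseteq[s\upharpoonright n]$, so basic neighborhoods of $\ddot\varphi(s)$ in the image pull back into basic Baire neighborhoods.

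The conceptual point to notice is that clause (ii) is used twice, for two complementary purposes: disjointness of the level-$n$ cells yields both injectivity and the implication ``$\ddot\varphi(r)\in V\Rightarrow r\in[s\upharpoonright n]$'', while relative openness is what makes the witnessing neighborhood $V$ exist as an open set of $\ddot H$ rather than merely of the union of cells. Once one sees this, no step presents real difficulty; everything reduces to a single-line estimate or a short set-theoretic manipulation, and no use of condition (iii) (which served the Cantor-topology claim for $\dot\varphi$) is required here.
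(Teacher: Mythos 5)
Your proposal is correct and takes essentially the same approach as the paper: continuity via the vanishing diameter of the cells together with Lemma~\ref{lemma3}, and openness via the relative-openness clause applied to the Lusin scheme $\langle\ddot Z_\sigma\rangle$; you merely spell out the one-line paper proof in full. One small imprecision worth noting: clause (ii) as stated gives only \emph{relative openness} of the $\ddot$-cells at each level, not disjointness; the pairwise disjointness you invoke for injectivity and for the implication $\ddot\varphi(r)\in V \Rightarrow r\upharpoonright n = s\upharpoonright n$ comes from the Lusin-scheme structure itself (disjointness of the solid-tree $\ddot$-cells, preserved through the auxiliary tree used in the verification of (ii)), so you should cite that rather than (ii) for the disjointness half of the argument.
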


\begin{proof}
  From the previous claim we get that $\ddot\varphi$ is
  continuous, since the Cantor topology extends the Baire
  topology. To see that it is open note that $\ddot\varphi$
  is the map associated with the Lusin scheme $\langle \ddot
  Z_\sigma:\sigma\in\btree\rangle$ and use (ii).
\end{proof}

\subsection{Construction of the solid tree}

The following lemma is pretty straightforward, so we leave
it without proof.

\begin{lemma}\label{lemma6}
  Let $S$ be a tree and $\langle\ddot X_\sigma: \sigma\in
  S\rangle$ be a Lusin scheme such that for each $\sigma\in
  S$ the set $\ddot X_\sigma$ is relatively open in
  $\bigcup\{\ddot X_\rho: \lh\rho=\lh\sigma\}$. Let
  $\Sigma\subseteq S$ be an antichain. Then for each
  $\sigma\in\Sigma$ the set $\ddot X_\sigma$ is relatively
  open in $\bigcup\{\ddot X_\rho: \rho\in\Sigma\}$.
\end{lemma}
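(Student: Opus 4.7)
\medskip

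\noindent\textbf{Proof proposal for Lemma \ref{lemma6}.}
Fix $\sigma\in\Sigma$ and put $n=\lh\sigma$. The plan is to exhibit an open set $V\subseteq\ddot H$ witnessing that $\ddot X_\sigma$ is relatively open in $U:=\bigcup\{\ddot X_\rho:\rho\in\Sigma\}$, i.e.\ $V\cap U=\ddot X_\sigma$. Since the hypothesis only gives openness one level at a time, I will take $V$ to be a finite intersection of open sets, one for each level $m=0,1,\dots,n$ along the initial segment of $\sigma$. Writing $\sigma_m=\sigma\har m$ and invoking the hypothesis for each $m\le n$, pick open $W_m\subseteq\ddot H$ with
$$W_m\cap\bigcup\{\ddot X_\rho:\lh\rho=m\}=\ddot X_{\sigma_m},$$
and set $V=\bigcap_{m\le n}W_m$.

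The inclusion $\ddot X_\sigma\subseteq V$ is immediate from the Lusin-scheme property $\ddot X_\sigma\subseteq\ddot X_{\sigma_m}\subseteq W_m$. It remains to verify $V\cap\ddot X_\rho=\emptyset$ for every $\rho\in\Sigma\setminus\{\sigma\}$. Split into two cases on $\lh\rho$. If $\lh\rho\ge n$, let $\rho'=\rho\har n$; the antichain assumption forces $\rho'\ne\sigma$ (otherwise $\rho$ would extend $\sigma$). Since $\rho'\in S$ has length $n$, we have $\ddot X_\rho\subseteq\ddot X_{\rho'}\subseteq\bigcup\{\ddot X_\tau:\lh\tau=n\}$, so $\ddot X_\rho\cap W_n\subseteq\ddot X_{\sigma}$; but same-level sets of a Lusin scheme are disjoint, so $\ddot X_\rho\cap\ddot X_\sigma=\emptyset$, giving $\ddot X_\rho\cap W_n=\emptyset$ and hence $\ddot X_\rho\cap V=\emptyset$. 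If $\lh\rho=m<n$, then the antichain property gives $\rho\ne\sigma_m$, so by the same same-level disjointness $\ddot X_\rho\cap\ddot X_{\sigma_m}=\emptyset$; and $\ddot X_\rho\cap W_m\subseteq\ddot X_{\sigma_m}$, so $\ddot X_\rho\cap W_m=\emptyset$ and hence $\ddot X_\rho\cap V=\emptyset$.

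The one small lemma used implicitly is that in a Lusin scheme, distinct nodes of the same length have disjoint sets; this follows by an easy induction on the level from the defining sibling-disjointness together with containment of children in parents. I expect no real obstacle: the main move — taking a finite intersection along the branch to $\sigma$ so that every rival element of $\Sigma$ is killed at some level $\le n$ — is straightforward once one sees that an antichain element $\rho$ either deviates from $\sigma$ before level $n$ or sits below $\ddot X_{\rho\har n}$ for some $\rho\har n\ne\sigma$.
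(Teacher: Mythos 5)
Your argument is correct, and you should note that the paper explicitly omits a proof of this lemma (``The following lemma is pretty straightforward, so we leave it without proof''), so there is nothing to compare against; you are supplying the missing argument. The strategy — intersecting, along the initial segments $\sigma\har 0,\dots,\sigma\har n$ of $\sigma$, one witnessing open set per level, and then observing that any $\rho\in\Sigma\setminus\{\sigma\}$ is ``killed'' at the level where it first deviates from $\sigma$ or at level $n$ — is exactly the natural one, and all the ingredients you invoke (trees are closed under initial segments, Lusin-scheme nesting $\ddot X_{\sigma^\smallfrown i}\subseteq\ddot X_\sigma$, same-level disjointness by induction) are standard and used correctly. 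One cosmetic slip: in the case $\lh\rho\ge n$ you write ``same-level sets of a Lusin scheme are disjoint, so $\ddot X_\rho\cap\ddot X_\sigma=\emptyset$,'' but $\rho$ and $\sigma$ need not have the same length; the disjointness you actually want is $\ddot X_{\rho'}\cap\ddot X_\sigma=\emptyset$ for $\rho'=\rho\har n\ne\sigma$, from which $\ddot X_\rho\cap\ddot X_\sigma=\emptyset$ follows via $\ddot X_\rho\subseteq\ddot X_{\rho'}$. This is the reasoning you clearly intended, and the conclusion is unaffected.
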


Now, we inductively construct the $i$-th approximation
$T^i=\{(\sigma,\tau)\in T: \#\sigma\leq i\}$ of $T$ and a
solid tree $\langle \tilde
Z^i_{\sigma,\tau}:(\sigma,\tau)\in T^i\rangle$ such that
$\tilde Z^{i+1}_{\sigma,\tau}\subseteq\tilde
Z^i_{\sigma,\tau}$ and $|\tilde
Z^i_{\sigma,\tau}|<2^{-\lh\sigma}$

We ensure that
\begin{itemize}
\item[(1)] if $\sigma\not=\sigma'$, then $|\dot
  Z^i_\sigma,\dot Z^i_{\sigma'}|<|\sigma,\sigma'|$ (hence
  $|\dot Z^i_\sigma,\dot Z^i_{\sigma^k}|<|\sigma,\sigma^k|$
  for each $k$),
\item[(2)] if $\sigma=\sigma'$, then $|\dot
  Z^i_\sigma|<2^{-\lh\sigma}$,
\item[(3)] $|\dot Z^{i+1}_\sigma,\dot Z^i_\sigma|<2^{-i}$
  (hence $|\dot Z^j_\sigma,\dot Z^i_\sigma|<2\cdot
  2^{-\lh\sigma}$ for each $j\geq i$).
\end{itemize}

Once this is done, set $\tilde Z_{\sigma,\tau}=\tilde
Z^{\#\sigma}_{\sigma,\tau}$ and note that
\begin{itemize}
\item[(4)] $|\dot Z_\sigma,\dot Z_{\sigma^k}|\leq
  |\sigma,\sigma^k|+2\cdot 2^{-\lh\sigma}$. 
\end{itemize}
Indeed, $|\dot Z_\sigma,\dot Z_{\sigma^k}|\leq |\dot
Z^{\#\sigma}_\sigma,\dot Z^{\#\sigma}_{\sigma^k}|+|\dot
Z^{\#\sigma}_{\sigma^k},\dot
Z^{\#\sigma^k}_{\sigma^k}|<|\sigma,\sigma^k|+2\cdot
2^{-\lh\sigma}$.

Moreover,
\begin{itemize}
\item[(5)] $\forall n<\omega\ \forall\sigma\in\omega^n\quad
  |\dot Z_\sigma|<5\cdot 2^{-n}$.
\end{itemize}
Indeed, by (2) and (4) we get $ |\dot Z_\sigma|\leq
2\cdot|\dot Z^{\#\sigma}_\sigma,\dot
Z^{\#\sigma}_{\sigma^n}|+|\dot Z^{\#\sigma}_{\sigma^n}|
<2\cdot|\sigma,\sigma^n|+2^{-n}<5\cdot 2^{-n}$.

Now we show how the above construction is used in Section
\ref{sec:Constr}. Note that (5) implies (i). To see (ii) and
(iii) observe that:
\begin{itemize}
\item (ii) follows by Lemma \ref{lemma6} applied to the
  Lusin scheme, whose $i$-th level is $\{\ddot
  Z^i_{\sigma,\tau}:\#\sigma\leq i\}$. The set
  $\{((\sigma,\tau),i)): (\sigma,\tau)\in T \wedge
  \#\sigma\leq i\}$ is given a tree ordering so that
  $((\sigma,\tau),i)\leq ((\sigma,\tau),i+1)$ and
  $((\sigma^*,\tau^*),i)\leq((\sigma,\tau),i+1)$; so
  \begin{displaymath}
    ((\sigma,\tau),i+1)^* = \left\{ 
      \begin{array}{ll}
        ((\sigma,\tau),i)& \mbox{if}\quad \#\sigma\leq i,\\
        ((\sigma^*,\tau^*),i) & \mbox{if}\quad \#\sigma=i+1.
      \end{array} \right.
  \end{displaymath}
\item to see (iii), fix $\sigma\in\omega^n$ and choose
  $\varepsilon_\sigma$ so small that if
  $|\rho,\sigma|<\varepsilon_\sigma$, then for each $m<n$:
  \begin{itemize}
  \item[] if $\sigma(m)\not=0$, then $\sigma(m)=\rho(m)$,
  \item[] if $\sigma(m)=0$, then $\rho(m)=0$ or
    $\rho(m)>\max\sigma, n$.  
  \end{itemize}
  Then $|\rho,\sigma|<\varepsilon_\sigma$ implies that
  $\sigma=\rho^k$ for some $k\leq n$ and thus
  $|Z_\rho,Z_\sigma|<|\rho,\sigma|+2\cdot 2^{-n}$, by (4).
\end{itemize}

\bigskip

\noindent The construction of the trees $T^i$ goes as follows.

\smallskip

\noindent \textbf{Step 0}. Put $T^0=\{\emptyset,\emptyset\}$
and $\tilde Z^0_{\emptyset,\emptyset}=\tilde A$.

\noindent \textbf{Step $\mathbf{n\rightarrow n+1}$}. 

Apply Lemma \ref{lemma5} to small $\varepsilon>0$, $S=T^n$,
$\eta$ such that $\#\eta=n+1$ and $\tilde
X_{\sigma,\tau}=\tilde Z^n_{\sigma,\tau}$ for
$(\sigma,\tau)\in T^n$, as $Y$ and $Y'$ use:
\begin{itemize}
\item[(\textbf{A})] if $\eta\not=\eta'$, then use
  $Y=Z^n_{\eta'^*}$ and $Y'=Z^n_{\eta'}$,
\item[(\textbf{B})] if $\eta=\eta'$, then use
  $Y=Z^n_{\eta^*}$ and $Y'=\{y\}$ for any $y\in Y$,
\end{itemize}
(note that in both cases we have $\dot Y'\subseteq\cl\dot
Y$). 

Put $T^{n+1}=S'$ and $\tilde Z^{n+1}_{\sigma,\tau}=\tilde
X'_{\sigma,\tau}$ for $(\sigma,\tau)\in T^{n+1}$.

We need to verify $(1)$ and $(2)$, small $\varepsilon$ takes
care of $(3)$. Pick $\sigma$. There are two cases

\noindent \textbf{Case 1}.  Suppose $\sigma=\eta$.

\noindent \textbf{Subcase 1A}. If we are in case $(A)$ of
the construction, then
$$|\dot Z^{n+1}_\eta,\dot Z^{n+1}_{\eta'}|<|\dot
Z^{n+1}_\eta,\dot Z^n_{\eta'}|+\varepsilon<|\dot
Z^n_{\eta^*},\dot Z^n_{\eta'^*}|+2\varepsilon$$ the first
inequality follows from Lemma \ref{lemma5}(3) and the second
follows from Lemma \ref{lemma5}(4) and choice of $Y,Y'$.
Next, if $\eta^*=\eta'^*$, then $|\dot Z^n_{\eta^*},\dot
Z^n_{\eta'^*}|=0$ and we are done; else if
$\eta^*\not=\eta'^*$, then $\eta'^*={\eta^*}'$ (the same
place changes in $\eta$ and $\eta^*$) and
$$|\dot
Z^n_{\eta^*},\dot Z^n_{\eta'^*}|+2\varepsilon\leq|\dot
Z^n_{\eta^*},\dot
Z^n_{{\eta^*}'}|+2\varepsilon<|\eta^*,{\eta^*}'|=|\eta,\eta'|$$
by small $\varepsilon$ and induction hypothesis.

\noindent \textbf{Subcase 1B}. If we are in case $(B)$ of
the construction, then we have
$$|\dot Z^{n+1}_\eta|\leq 2\cdot|\dot
Z^{n+1}_\eta,Y'|<2\cdot |\dot
Z^n_{\eta^*},Y|+2\varepsilon=2\varepsilon$$ since $|\dot
Z^n_{\eta^*},Y|=0$.

\noindent \textbf{Case 2}. Suppose $\sigma\not=\eta$. Now
$(1)$ follows by $$|\dot
Z^{n+1}_\sigma,\dot Z^{n+1}_{\sigma'}|<|\dot
Z^{n+1}_\sigma,\dot Z^n_\sigma|+|\dot Z^n_\sigma,\dot
Z^n_{\sigma'}|+|\dot Z^n_{\sigma'},\dot
Z^{n+1}_{\sigma'}|<\varepsilon+|\dot Z^n_\sigma,\dot
Z^n_{\sigma'}|+\varepsilon<|\sigma,\sigma'|$$ by the
induction hypothesis and since $\varepsilon$ is small
enough.

Finally, $(2)$ follows by $Z^{n+1}_\sigma\subseteq
Z^n_\sigma$.


\begin{thebibliography}{}

\bibitem{CMPS} Cicho\'n J., Morayne M., Pawlikowski J. and
  Solecki S., \textit{Decomposing Baire functions}, Journal
  of Symbolic Logic, Vol. 56, Issue 4, 1991, pp. 1273--1283
\bibitem{Sol} Solecki S., \textit{Decomposing Borel sets and
    functions and the structure of Baire class~1 functions},
  Journal of the American Mathematical Society, Vol. 11, No.
  3, 1998, pp. 521--550
\bibitem{Zpl:DSTDF} Zapletal J., \textit{Descriptive Set
    Theory and Definable Forcing}, Memoirs of the American
  Mathematical Society, 2004
\end{thebibliography}
\end{document}